\newtheorem{theorem}{Theorem}[section]
\theoremstyle{definition}
\newtheorem{definition}[theorem]{Definition}
\newtheorem{example}[theorem]{Example}
\newtheorem{proposition}[theorem]{Proposition}
\newtheorem{corollary}[theorem]{Corollary}
\theoremstyle{remark}
\newtheorem{remark}[theorem]{Remark}
\begin{document}
\title[Hyperideals of
Krasner commutative hyperrings ]{$\delta$-$r$-hyperideals and $\phi$-$\delta$-$r$-hyperideals of
commutative Krasner hyperrings }
\author[P. Xu]{Peng Xu}
\address{School of Computer Science of Information Technology,
Qiannan Normal University for Nationalities, 
Duyun, Guizhou, 558000, P.R. China. Orcid: 0000-0001-7028-9987}
\email{gdxupeng@gzhu.edu.cn}
\author[M. Bolat]{Melis Bolat}
\address{Department of Mathematics, Yildiz Technical University, Istanbul, Turkey.
Orcid: 0000-0002-0667-524X}
\email{melisbolat1@gmail.com }
\author[E. Kaya]{Elif Kaya}
\address{Department of Mathematics and Science Education, Istanbul Sabahattin Zaim
University, Istanbul, Turkey. Orcid: 0000-0002-8733-2734}
\email{elif.kaya@izu.edu.tr}
\author[S. Onar]{Serkan Onar}
\address{Department of Mathematical Engineering, Yildiz Technical University, Istanbul,
Turkey. Orcid: 0000-0003-3084-7694}
\email{serkan10ar@gmail.com}
\author[B.A. Ersoy]{Bayram Ali Ersoy}
\address{Department of Mathematics, Yildiz Technical University, Istanbul, Turkey.
Orcid: 0000-0002-8307-9644}
\email{ersoya@yildiz.edu.tr}
\author[K. Hila]{Kostaq Hila}
\address{Department of Mathematical Engineering, Polytechnic University of Tirana,
Tirana, Albania. Orcid: 0000-0001-6425-2619}
\email{kostaq\_hila@yahoo.com}
\subjclass[2000]{13A15, 13C05, 13C13.}
\keywords{$r$-hyperideal, $\delta$-$r$-hyperideal, $\phi$-$\delta$-primary hyperideal,
$\phi$-$\delta$-$r$-hyperideal.}

\begin{abstract}
This paper deals with an important class of multialgebras, called Krasner hyperrings. Our purpose is to define the expansion of $r$-hyperideals and to
extend this concept to $\phi$-$\delta$-$r$-hyperideal in commutative Krasner hyperrings with nonzero identity. $\delta$-$r$-hyperideals of commutative
Krasner hyperrings are studied. Some properties of $\phi$-$\delta$-$r$-hyperideals are investigated and several examples are provided. 

\end{abstract}
\maketitle

\section{Introduction}

Several authors have been studied prime and primary ideals which are quite
important in commutative rings. In 2001, Zhao \cite{X1} introduced the concept
of expansion of ideals of a commutative ring. An expansion of ideals is a
function $\delta$ that assigns to each ideal $\mathcal{N}$ of $\Re$ to another ideal
$\delta(\mathcal{N})$ of the same ring $\Re$ such that for all ideals $\mathcal{N}$ of $\Re,$
$\mathcal{N}\subseteq\delta(\mathcal{N})$ and if $\mathcal{N}\subseteq \mathcal{M}$ where $\mathcal{N}$ and $\mathcal{M}$ are ideals of
$\Re$, then $\delta(\mathcal{N})\subseteq\delta(\mathcal{M}).$ Let $\delta$ be an ideal
expansion. An ideal $\mathcal{N}$ of $\Re$ is called a $\delta$-primary ideal of $\Re,$
if $ab\in \mathcal{N},$ then $a\in \mathcal{N}$ or $b\in\delta(\mathcal{N}),$ for all $a,b\in\Re.$

Further, the concepts of $\phi$-prime, $\phi$-primary ideals are examined in
\cite{X2,X3}.\ A reduction of ideals is a function $\phi$ that maps each ideal
$\mathcal{N}$ of $\Re$ to another ideal $\phi(\mathcal{N})$ of the same ring $\Re$ satisfying the
following terms: $i)$ for all ideals $\mathcal{N}$ of $\Re,$ $\phi(\mathcal{N})\subseteq \mathcal{N}$, $ii)$
if $\mathcal{N}\subseteq \mathcal{M}$ where $\mathcal{N}$ and $\mathcal{M}$ are ideals of $\Re$, then $\phi
(\mathcal{N})\subseteq\phi(\mathcal{M})$. Suppose that $\phi$ is an ideal reduction. A proper
ideal $\mathcal{N}$ of $\Re$ is called a $\phi$-prime ideal of $\Re,$ if $ab\in
\mathcal{N}-\phi(\mathcal{N}),$ then $a\in \mathcal{N}$ or $b\in \mathcal{N},$ for all $a,b\in\Re.$ $\mathcal{N}$ is called
$\phi$-primary ideal of $\Re,$ if $ab\in \mathcal{N}-\phi(\mathcal{N}),$ then $a\in \mathcal{N}$ or
$b\in\sqrt{\mathcal{N}},$ for all $a,b\in\Re.$ Later,\ Jaber \cite{X4} characterized
$\phi$-$\delta$-primary ideals in commutative rings. Assume that $\delta$ is
an ideal expansion and $\phi$ is an ideal reduction. An ideal $\mathcal{N}$ of $\Re$ is
called a $\phi$-$\delta$- primary ideal of $\Re,$ if $ab\in \mathcal{N}-\phi(\mathcal{N}),$ then
$a\in \mathcal{N}$ or $b\in\delta(\mathcal{N}),$ for all $a,b\in\Re$.

Recently, Mohamadian \cite{X5} investigated properties of the class of $r$-ideals. A proper ideal $\mathcal{N}$ in a
commutative ring $\Re$ is called an $r$-ideal (resp., $pr$-ideal), if $ab\in
\mathcal{N}$ with $ann(a)=0$ implies that $b\in \mathcal{N}$ (resp., $b^{n}\in \mathcal{N}$, for some $n\in%
\mathbb{N}
$), for each $a,b\in\Re$. Ugurlu \cite{X6} studied generalizations of
$r$-ideals. Assume that $\Re$ is commutative ring, $\mathcal{N}$ is a proper ideal of
$\Re$ and $\phi:Id(\Re)\rightarrow Id(\Re)\cup\{\emptyset\}$ is a function.
$\mathcal{N}$ is called $\phi$-$r$-ideal (resp., $\phi$-$pr$-ideal) if $ab\in \mathcal{N}-\phi(\mathcal{N})$
with $ann(a)=0$ implies that $b\in \mathcal{N}$ (resp., $b^{n}\in \mathcal{N},$ for some $n\in%
\mathbb{N}
$)$,$ for $a,b\in\Re.$

Hyperstructures were introduced at the 8th Congress of Scandinavian
Mathematicians in 1934 by a French mathematician Marty \cite{X7}. In the sense
of Marty, let $G\neq\emptyset,$ a mapping $\circ:G\times G\longrightarrow
\mathcal{P}^{\ast}(G)$ is a hyperoperation and $(G,\circ)$ is hypergroupoid. If $A$ and $B$ are two non-empty subsets of $G$ and $x \in G$, then we define: $A \circ B = \bigcup \limits_{\substack{a\in A \atop b\in B}} a\circ b$, $A \circ x = A \circ \lbrace x \rbrace$ and $x \circ B = \lbrace x \rbrace \circ B$. If
$(G,\circ)$ is a hypergroupoid and $x\circ(y\circ z)=(x\circ y)\circ z,$ for
$\forall$ $x,y,z\in G,$ which means that $\bigcup \limits_{\substack{u\in x \circ y}} u \circ z = \bigcup \limits_{\substack{v\in y \circ z}} x \circ v$, then $G$ is a semihypergroup. When $(G,\circ)$ is a
semihypergroup with $x\circ G=G\circ x=G,$ for $\forall x\in G,$ then
$(G,\circ)$ is called a hypergroup. A nonempty set $G$ along with the
hyperoperation $+$ is called a canonical hypergroup if the following axioms
hold: i) $x+(y+z)=(x+y)+z,$ for $x,y,z\in G;$ ii) $x+y=y+x,$ for $x,y\in
G;$ iii) there exists $0\in G$ such that $x+0=\{x\},$ for any $x\in G;$ iv)
for any $x\in G,$ there exists a unique element $x^{\prime}\in G,$ such that
$0\in x+x^{\prime}$ ($x^{\prime}$ is called as the opposite of $x$ and it is denoted by $-x$). v)
$ z\in x+y$ implies that $y\in-x+z$ and $x\in z-y,$ that is $(G,+)$ is
reversible \cite{X8}. A comprehensive review of the theory of hypergroups can be found in \cite{corsini,davvaz,massouros}. 

After that,  many papers and books concerning hyperstructure theory have
appeared in literature \cite{corsini,corsini1,davvaz,davvaz1,davvaz2,vougiouklis}.
Although
there are different types of hyperrings, in this paper, we consider the Krasner hyperrings \cite{X12} which are one of the most important studied class of hyperrings. If $(\Re,+)$ is a canonical hypergroup, $(\Re,\cdot)$ is a
semigroup having $0$ as a bilaterally absorbing element for all elements of
$\Re$ and $\cdot$ is distributive with respect to $+$, then $(G,+,\cdot)$ is known as
Krasner hyperring (see \cite{krasner1}). After Krasner introduced the definition of hyperring, the study of hyperrings has been of great importance and has a multitude of applications to other disciplines, for example see \cite{d1,d2,massouros2}. Students of Krasner, namely Jean Mittas and D. Stratigopoulos, well-known authors, have studied hyperrings and hyperfields. Other names can be also quoted in this respect with nice contributions: P. Corsini, C. Massouros, A. Nakassis, T. Vougiouklis, T.
Koguentsof, A. Dramalidis, S. Spartalis, B. Davvaz, R. Ameri, V. Leoreanu-Fotea, I. Cristea, G. Pinotsis, Y. Kemprasit
and many others. Some fundamental notions and results in the theory of hyperrings can be found in \cite{davvaz3,Massouros1,nakassis,mirela,vougiouklis1}. 

Let $\Re$ be a hyperring. For $a\in\Re,$ we define $ann(a)=\{r\in\Re:ra=0\}$.
	If $ann(a)=0$ (resp., $ann(a)\neq0$)$,$ $a$ is said to be regular (resp., zero
	divisor). We use the notion $r(\Re)$ (resp., $zd(\Re))$ to denote the set of all regular
	elements (resp., zero divisors) \cite{X17}. If $\mathcal{N}$ is a hyperideal of $\Re$ and $A$ a subset of $\Re$, then we denote  $(\mathcal{N}:A)=\{x\in \Re, x\cdot A\subseteq \mathcal{N}\}$. It is clear that $(0 : A) = ann(A)$.
	
According to the definiton of Davvaz, a proper hyperideal $\mathcal{N}$ in a commutative
Krasner hyperring $\Re$ is called an $r$-hyperideal (resp., $pr$-hyperideal),
if $a\cdot b\subseteq \mathcal{N}$ with $ann(a)=0$ implies that $b\in \mathcal{N}$ (resp.,
$b^{n}\in \mathcal{N}$, for some $n\in%
\mathbb{N}
$), for each $a,b\in\Re$ \cite{X13}. Some other results on $\phi-\delta$-primary hyperideals and $r$-hyperideals in Krasner hyperrings are obtained recently \cite{kaya,Xu}.

Assume that $\Re$ is a Krasner hyperring, $\phi:Id(\Re)\rightarrow Id(\Re
)\cup\{\emptyset\}$ be a function and $\emptyset\neq \mathcal{N}\in Id(\Re).$ Then $\mathcal{N}$
is said to be a $\phi$-prime (resp. $\phi$-primary) hyperideal of $\Re$ if
whenever $r,s\in\Re$ and $r\cdot s\in \mathcal{N}-$ $\phi(\mathcal{N}),$ then $r\in \mathcal{N}$ or $s\in \mathcal{N}$
(resp., $r\in \mathcal{N}$ or $s^{n}\in \mathcal{N}$). Let $\Re$ be a Krasner hyperring, $\mathcal{N}$ be a
proper ideal of $\Re$ and $\phi:Id(\Re)\rightarrow Id(\Re)\cup\{\emptyset\}$
be a function. $\mathcal{N}$ is called $\phi$-$r$-hyperideal ($\phi$-$pr$-hyperideal) if
$a\cdot b\in \mathcal{N}-\phi(\mathcal{N})$ with $ann(a)=0$ implies that $b\in \mathcal{N}$ ($b^{n}\in \mathcal{N}$),
for $a,b\in\Re.$

In this paper, firstly we study $\delta$-$r$-hyperideals of commutative
Krasner hyperrings. We present the main theorem on $\delta$-$r$-hyperideals. We
show that the intersection of $\delta$-$r$-hyperideals is a $\delta$%
-$r$-hyperideal. Therefore, we show the image and inverse image of a $\delta
$-$r$-hyperideal is also a $\delta$-$r$-hyperideal. In the last section, we
investigate some properties of $\phi$-$\delta$-$r$-hyperideals. We prove that if
$M_{i}$ is a directed set of $\phi$-$\delta$-$r$-hyperideals of $\Re,$ then the
union of $M_{i}$ is a $\phi$-$\delta$-$r$-hyperideal. Also, it is shown that the image and
inverse image of a $\phi$-$\delta$-$r$-hyperideal is $\phi$-$\delta$%
-$r$-hyperideal. As a conclusion, we find out its relation between von Neumann
regular hyperideals \cite{X7} and pure hyperideals.

\section{$\delta$-$r$-hyperideals}

In this paper, $(\Re,+,\cdot)$ denotes commutative Krasner hyperring with
nonzero identity. $r(\Re)$ means the set of all regular elements of $\Re.$

Remember that an expansion of hyperideal, or simply hyperideal expansion, is a
function $\delta$ that assigns each hyperideal $\mathcal{N}$ of a hyperring $\Re$ to
another hyperideal $\delta(\mathcal{N})$ of the same hyperring if the following statements hold:

(i) $\mathcal{N}$ $\subseteq$ $\delta(\mathcal{N})$

(ii) $\mathcal{P}\subseteq Q$ implies $\delta(\mathcal{P})\subseteq\delta(Q)$ for $\mathcal{P},Q$
hyperideals of $\Re$ \cite{X14}.

Moreover, let $\delta$ be a hyperideal exapnsion. A hyperideal $\mathcal{N}$ of
$\Re$ is said to be $\delta$-primary if $a\cdot b\in \mathcal{N}$ and $a\notin \mathcal{N}$ imply
$b\in\delta(\mathcal{N})$, for all $a,b\in\Re$ \cite{X14}.

\begin{definition}
Let $\delta$ be an expansion of hyperideals, $\Re$ be a Krasner hyperring and
$\mathcal{N}$ be a proper hyperideal of $\Re$. $\mathcal{N}$ is called $\delta$-$r$-hyperideal if
$a\cdot b\in \mathcal{N}$ with $ann(a)=0$ implies $b\in\delta(\mathcal{N})$, for all $a,b\in\Re$.
\end{definition}

\begin{example}
(1) If $\mathcal{N}$ is $r$-hyperideal with $\delta_{0}(\mathcal{N})=\mathcal{N}$, then $\mathcal{N}$ is $\delta_{0}$-$r$-hyperideal.

(2) If $\mathcal{N}$ is p$r$-hyperideal with $\delta_{1}(\mathcal{N})=\sqrt{\mathcal{N}},$ then $\mathcal{N}$ is
$\delta_{1}$-$r$-hyperideal.

(3) If $\mathcal{N}$ is $r$-hyperideal with $\delta_{r}(\mathcal{N})=\Re,$ then $\mathcal{N}$ is $\delta_{r}$-$r$-hyperideal.
\end{example}

\begin{remark}
Let $\delta$ be an expansion of hyperideals. It is clear that the following statement hold:

i) $ \mathcal{N}$ is $\phi$-$r$-hyperideal $\Rightarrow \mathcal{N}$ is $r$-hyperideal
$\Rightarrow \mathcal{N}$ is $\delta$-$r$-hyperideal.

ii) If $\mathcal{N}$ is $\delta$-$r$-hyperideal, then $(\mathcal{N}:\mathcal{M})$ is $\delta$%
-$r$-hyperideal, for any subset $\mathcal{M}$ of $\Re$.

iii) If $\delta(\mathcal{N})$ is $\delta$-$r$-hyperideal, then $\mathcal{N}$ is $\delta$-$r$-hyperideal.
\end{remark}

\begin{remark}
i) Assume that $\delta$ and $\gamma$ are hyperideal expansions and
$\delta(\mathcal{N})\subseteq\gamma(\mathcal{N}),$ for each proper hyperideal $\mathcal{N}.$ Then any $\delta
$-$r$-hyperideal is $\gamma$-r-hyperideal.

ii) Let $\delta_{1}$ and $\delta_{2}$ be two hyperideal expansions,
$\mathcal{N}$ be an $r$-hyperideal and $\delta(\mathcal{N})=$ $\delta_{1}(\mathcal{N})\cap$ $\delta_{2}(\mathcal{N}).$
Then, $\delta$ is also a hyperideal expansion.

iii) Let $\delta$ be an expansion of hyperideals. $E_{\delta}%
(\mathcal{P})=\bigcap\{\mathcal{M}\in Id(\Re):\mathcal{P}\subseteq \mathcal{M},$ $\mathcal{M}$ is $\delta$-r-hyperideal$\}.$
Then $E_{\delta}$ is a hyperideal expansion.

For all $\mathcal{P}\in Id(\Re)$, by the definition of $E_{\delta}(\mathcal{P})$, it is clear that $\mathcal{P}\subseteq E_{\delta}(\mathcal{P}).$ We show that for any
$K,L\in Id(\Re)$, if $K\subseteq L$, then $E_{\delta}(K)\subseteq E_{\delta
}(L)$. Indeed: for any $K,L\in Id(\Re),$ if $K\subseteq L,$ then the $\delta
$-r-hyperideal that contains $L$ also contains $K$. Furthermore, there may be
$\delta$-r-hyperideals that contain $L$ but do not contain $K$. As a
result, $E_{\delta}(K)\subseteq E_{\delta}(L).$
\end{remark}

\begin{theorem}
Let $\Re$ satisfy the strong annihilator condition, $\mathcal{N}$ be a proper
hyperideal, $\delta$ be an expansion of hyperideals. $\mathcal{P}$ is $\delta
$-r-hyperideal if and only if for finitely generated hyperideal $\mathcal{N}$ and every
hyperideal $\mathcal{M}$ of $\Re$ such that $\mathcal{N}\cdot \mathcal{M}\subseteq \mathcal{P}$ and $ann(\mathcal{N})=0$ implies
$\mathcal{M}\subseteq\delta(\mathcal{P}).$
\end{theorem}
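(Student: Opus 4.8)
The plan is to prove the two implications separately, with the reverse implication obtained by specializing the ideal-theoretic condition to principal hyperideals, and the forward implication resting essentially on the strong annihilator condition.

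For the reverse implication, I would assume the stated condition on finitely generated hyperideals and verify the defining property of a $\delta$-$r$-hyperideal directly. Given $a,b\in\Re$ with $a\cdot b\in P$ and $ann(a)=0$, I would set $N=\langle a\rangle$ and $M=\langle b\rangle$, the principal hyperideals generated by $a$ and $b$; since a principal hyperideal is finitely generated, $N$ is an admissible choice. I would first record that $ann(\langle a\rangle)=ann(a)=0$: any annihilator of $N$ annihilates $a\in N$, and conversely any annihilator of $a$ annihilates every $r\cdot a$ by distributivity, hence all of $\langle a\rangle$. Next, because $a\cdot b\in P$ and $P$ absorbs products, every generator-product of $N\cdot M$ lies in $P$, so $N\cdot M\subseteq P$. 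The hypothesis then yields $M\subseteq\delta(P)$, and in particular $b\in\delta(P)$, which is exactly the defining condition.

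For the forward implication, I would assume $P$ is a $\delta$-$r$-hyperideal and take a finitely generated hyperideal $N=\langle a_{1},\dots,a_{n}\rangle$ together with an arbitrary hyperideal $M$ satisfying $N\cdot M\subseteq P$ and $ann(N)=0$. The crucial step is to invoke the strong annihilator condition on $\Re$, which guarantees that for the finitely generated $N$ there exists a single element $a\in N$ with $ann(a)=ann(N)=0$. With such a regular $a$ in hand, I would pick any $b\in M$; then $a\cdot b\in N\cdot M\subseteq P$ while $ann(a)=0$, so the $\delta$-$r$-hyperideal property of $P$ forces $b\in\delta(P)$. Since $b\in M$ was arbitrary, this gives $M\subseteq\delta(P)$, completing this direction.

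The main obstacle is the forward direction, and more precisely the correct deployment of the strong annihilator condition: the entire reason for restricting to \emph{finitely generated} $N$ is that $ann(N)=0$ can then be witnessed by a single regular element of $N$, which is what reduces the ideal-level statement back to the element-level definition. Without finite generation (or without this condition) no such $a$ can be extracted and the reduction fails. A secondary technical point, to be stated explicitly in the hyperring setting, is the identity $ann(\langle a\rangle)=ann(a)$ and the inclusion $N\cdot M\subseteq P$; both follow from the distributivity and absorption axioms of a Krasner hyperring, but deserve care because products here are hyperoperations returning subsets rather than single elements.
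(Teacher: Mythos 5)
Your proposal is correct and follows essentially the same route as the paper: the forward direction extracts, via the strong annihilator condition, a single element $a\in N$ with $ann(a)=ann(N)=0$ and applies the element-level definition (the paper phrases this as a proof by contradiction, you argue directly, which is an immaterial difference), and the reverse direction specializes to the principal hyperideals $\langle a\rangle$ and $\langle b\rangle$ exactly as the paper does. Your added remarks verifying $ann(\langle a\rangle)=ann(a)$ and $N\cdot M\subseteq P$ are details the paper leaves implicit, not a different argument.
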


\begin{proof}
Assume that $\mathcal{P}$ is $\delta$-$r$-hyperideal, $\mathcal{N}\cdot \mathcal{M}\subseteq \mathcal{P},$ $ann(\mathcal{N})=0$
and $\mathcal{M}\nsubseteq\delta(\mathcal{P}).$ Then there exists $a\in \mathcal{N}, $such that
$ann(a)=ann(\mathcal{N})=0$ and $b\in \mathcal{M}-\delta(\mathcal{P}).$ Thus, $a\cdot b\in \mathcal{N}\cdot \mathcal{M}\subseteq
\mathcal{P},$ $ann(a)=0$ and $b\notin\delta(\mathcal{P}).$ This is a contradiction.

Conversely, let $a\cdot b\in \mathcal{P}$ and $ann(a)=0.$ Then $<a>\cdot<b>\subseteq \mathcal{P}$
and $ann(<a>)=0.$ Because of the assumption, $<b>\subseteq\delta(\mathcal{P}).$ Thus,
$b\in\delta(\mathcal{P}).$ Hence, $\mathcal{P}$ is $\delta$-$r$-hyperideal.
\end{proof}

\begin{theorem}
Let $\delta$ be a hyperideal expansion. Then the following statements are equivalent:

i) $ \mathcal{N}$ is $\delta$-r-hyperideal;

ii) $(\mathcal{N}:a)\subseteq\delta(\mathcal{N}),$ for $a\in r(\Re);$

iii) $(\mathcal{N}:\mathcal{P})\subseteq\delta(\mathcal{N}),$ $\mathcal{P}$ is a hyperideal of $\Re$ such that
$\mathcal{P}\cap r(\Re)\nsubseteq \mathcal{N}.$
\end{theorem}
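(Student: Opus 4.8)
The plan is to establish the three statements as a cycle of implications, $(i)\Rightarrow(ii)\Rightarrow(iii)\Rightarrow(i)$. The heart of the matter is the interplay between the membership formulation in $(i)$ and the quotient formulations in $(ii)$ and $(iii)$; the equivalence $(i)\Leftrightarrow(ii)$ is really just the definition of a $\delta$-$r$-hyperideal read through the quotient $(N:a)=\{x\in\Re : a\cdot x\in N\}$, so I would dispatch it first and then spend the effort bridging to the hyperideal-quotient condition $(iii)$.

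First I would prove $(i)\Rightarrow(ii)$: fix $a\in r(\Re)$, so $ann(a)=0$, and take $x\in(N:a)$, i.e.\ $a\cdot x\in N$; since $N$ is $\delta$-$r$-hyperideal this forces $x\in\delta(N)$, whence $(N:a)\subseteq\delta(N)$. Running the same computation backwards gives $(ii)\Rightarrow(i)$, because $a\cdot b\in N$ with $ann(a)=0$ means $a\in r(\Re)$ and $b\in(N:a)\subseteq\delta(N)$. For $(ii)\Rightarrow(iii)$ I would take a hyperideal $P$ with $P\cap r(\Re)\nsubseteq N$ and choose a regular element $a\in P$ with $a\notin N$; for any $x\in(N:P)$ we have $x\cdot P\subseteq N$, so in particular $x\cdot a\in N$, giving $x\in(N:a)\subseteq\delta(N)$ by $(ii)$. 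Thus $(N:P)\subseteq\delta(N)$.

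The remaining direction $(iii)\Rightarrow(i)$ is where the real work lies. Suppose $a\cdot b\in N$ with $ann(a)=0$. The natural move is to pass to the principal hyperideal $P=\langle a\rangle$; using that $\Re$ has identity one checks $(N:a)=(N:\langle a\rangle)$, and $a\cdot b\in N$ yields $b\in(N:\langle a\rangle)$. When $a\notin N$, the element $a$ itself witnesses $P\cap r(\Re)\nsubseteq N$, so $(iii)$ applies directly and delivers $b\in\delta(N)$, as desired.

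The hard part will be the boundary case $a\in N$. Then $\langle a\rangle\subseteq N$, so $P\cap r(\Re)\subseteq N$ and the hypothesis of $(iii)$ is unavailable for this particular $P$; meanwhile $a\cdot b\in N$ holds automatically for \emph{every} $b$, so to conclude I would need $\delta(N)=\Re$. The crux is therefore to show that a regular element lying inside $N$ forces $\delta(N)=\Re$: noting that $1\in r(\Re)$ and $1\notin N$ (as $N$ is proper), and that for a $\delta$-$r$-hyperideal any regular $a\in N$ makes $a\cdot b\in N$ for all $b$ and hence collapses $\delta(N)$ to $\Re$. Extracting this conclusion from $(iii)$ alone — by producing, for a given $b$, a regular element outside $N$ that carries $b$ into $N$ — is the most delicate step, and I would treat it as a separate case handled with care, since it is precisely where the equivalence is most fragile.
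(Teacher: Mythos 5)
Your treatment of $(i)\Leftrightarrow(ii)$ and $(ii)\Rightarrow(iii)$ is correct and in fact more careful than the paper's, which dismisses $(ii)\Rightarrow(iii)$ as ``obvious'' and writes $(iii)\Rightarrow(i)$ in a garbled way (it never specifies $P$; the intended argument is exactly yours, namely $P=\langle a\rangle$ with the regular element $a$ serving as the witness that $P\cap r(\Re)\nsubseteq N$). The problem is the boundary case you flag at the end: you correctly isolate it, but you do not close it, so the proposal as written is not a complete proof of $(iii)\Rightarrow(i)$. Saying you ``would treat it as a separate case handled with care'' is not a proof of that case.

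Moreover, the case cannot be closed: when $N$ contains a regular element, $(iii)$ genuinely fails to imply $(i)$. Take $\Re=\mathbb{Z}$ (a Krasner hyperring with singleton hypersums), $N=2\mathbb{Z}$, and $\delta=\delta_{0}$, so $\delta(N)=N$. Here $r(\mathbb{Z})=\mathbb{Z}\setminus\{0\}$, and the only hyperideals $P$ with $P\cap r(\mathbb{Z})\nsubseteq 2\mathbb{Z}$ are $P=n\mathbb{Z}$ with $n$ odd and nonzero, for which $(2\mathbb{Z}:n\mathbb{Z})=2\mathbb{Z}\subseteq\delta(N)$; so $(iii)$ holds. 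But $2\cdot 1\in N$ with $ann(2)=0$ and $1\notin\delta(N)$, so $(i)$ fails (and so does the literal reading of $(ii)$, since $(2\mathbb{Z}:2)=\mathbb{Z}$). Thus your instinct that this is ``precisely where the equivalence is most fragile'' is exactly right: the implication $(iii)\Rightarrow(i)$ requires the additional hypothesis that $N\cap r(\Re)=\emptyset$ (automatic for ordinary $r$-hyperideals, since a proper $\delta_{0}$-$r$-hyperideal can contain no regular element, but not automatic for a general expansion $\delta$), or else one must weaken the theorem. The paper's own proof silently assumes the regular element lies outside $N$ and therefore shares this gap; your proposal has the merit of exposing it, but it neither fills it nor identifies the missing hypothesis.
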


\begin{proof}
(i)$\Rightarrow$(ii) Let $x\in$ $(\mathcal{N}:a),$ for $a\in r(\Re)-\delta(\mathcal{N}).$ Then
$x\cdot a\in \mathcal{N}$ and $ann(a)=0.$ Since $\mathcal{N}$ is $\delta$-$r$-hyperideal,
$x\in\delta(\mathcal{N}).$

(ii)$\Rightarrow$(iii) It is obvious.

(iii)$\Rightarrow$(i) Let $a\cdot x\in \mathcal{N}$ and $\mathcal{P}\cap r(\Re)\nsubseteq \mathcal{N}.$ Then
for an element $x$ of $\Re,$ $x\in \mathcal{P}\cap r(\Re)$ and $x\notin \mathcal{N}.$ This means
$ann(x)=0$ and $a\in(\mathcal{N}:\mathcal{P})\subseteq\delta(\mathcal{N}).$ Thus, $\mathcal{N}$ is $\delta$-$r$-hyperideal.
\end{proof}

\begin{theorem}
Let $\delta$ be a hyperideal expansion that preserves intersection. If
$Q_{1},Q_{2},...,Q_{n}$ are $\delta$-$r$-hyperideals of $\Re$ and $\mathcal{P}=\delta
(Q_{i})$ for all $i$, then $Q=\bigcap_{i=1}^{n}Q_{i}$ is $\delta$-r-hyperideal.
\end{theorem}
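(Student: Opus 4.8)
The plan is to verify the defining condition of a $\delta$-$r$-hyperideal directly for $Q=\bigcap_{i=1}^{n}Q_{i}$. First I would record that $Q$ is a legitimate object of the definition, i.e.\ a proper hyperideal: a finite intersection of hyperideals is again a hyperideal, and since $Q\subseteq Q_{1}\neq\Re$, the intersection $Q$ is proper. With this settled, the whole content of the proof is a membership chase through the two hypotheses on $\delta$, namely that $\delta$ is an expansion preserving intersection (so $\delta(\bigcap_{i}Q_{i})=\bigcap_{i}\delta(Q_{i})$) and that the common value $\delta(Q_{i})=P$ holds for every $i$.

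Next I would take arbitrary $a,b\in\Re$ with $a\cdot b\in Q$ and $ann(a)=0$, and show $b\in\delta(Q)$. Since $Q=\bigcap_{i=1}^{n}Q_{i}$, the membership $a\cdot b\in Q$ gives $a\cdot b\in Q_{i}$ for every $i$. Applying the assumption that each $Q_{i}$ is a $\delta$-$r$-hyperideal, together with $ann(a)=0$, yields $b\in\delta(Q_{i})$ for every $i$; because $\delta(Q_{i})=P$ for all $i$, this says precisely $b\in P$. The final step is to identify $\delta(Q)$ with $P$ using the intersection-preserving property:
\[
\delta(Q)=\delta\Big(\bigcap_{i=1}^{n}Q_{i}\Big)=\bigcap_{i=1}^{n}\delta(Q_{i})=\bigcap_{i=1}^{n}P=P,
\]
so that $b\in P=\delta(Q)$, which is exactly the required conclusion. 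Hence $Q$ is a $\delta$-$r$-hyperideal.

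There is no genuinely hard step here; the argument is essentially bookkeeping, and the two points that merit attention are minor. The first is confirming that the intersection-preserving hypothesis, typically stated for two hyperideals, extends to the $n$-fold intersection $\bigcap_{i=1}^{n}Q_{i}$, which follows by an easy induction on $n$ from the two-set case. The second is the role of the hypothesis $P=\delta(Q_{i})$: it is what guarantees that $\bigcap_{i}\delta(Q_{i})$ collapses to the single hyperideal $P$, so that the $i$-by-$i$ conclusions $b\in\delta(Q_{i})$ assemble into the single statement $b\in\delta(Q)$. In fact the intersection-preserving property already forces $\bigcap_{i}\delta(Q_{i})=\delta(Q)$ regardless of whether the $\delta(Q_{i})$ coincide, so the common value $P$ is chiefly a convenience that makes the identification transparent rather than an essential restriction.
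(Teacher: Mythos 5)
Your proof is correct and follows essentially the same route as the paper's: a direct verification of the definition, passing from $a\cdot b\in Q$ to membership in the $Q_i$, invoking the $\delta$-$r$-property of each $Q_i$, and then identifying $\delta(Q)$ with $P$ via the intersection-preserving hypothesis. Your closing observation that the common-value assumption $P=\delta(Q_i)$ is actually dispensable (since $b\in\bigcap_i\delta(Q_i)=\delta(Q)$ already suffices) is correct and slightly sharper than what the paper records.
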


\begin{proof}
Let $\delta$ be a hyperideal expansion that preserves intersection, $x\cdot
y\in Q$ and $ann(x)=0.$ Then, $x\cdot y\in Q_{k}$ for some $k.$ Since $Q_{k}$
is $\delta$-$r$-hyperideal, then $y\in\delta(Q_{k}).$ $\delta(Q)=\delta
(\bigcap_{i=1}^{n}Q_{i})=\bigcap_{i=1}^{n}\delta(Q_{i})=\mathcal{P}=\delta(Q_{k}).$
Hence, $y\in\delta(Q).$
\end{proof}

\begin{theorem}
Let $\delta$ be global and $\varphi:\Re\rightarrow S$ be a good epimorphism
and $\mathcal{N}$ be a $\delta$-$r$-hyperideal of $S$. Then $\varphi^{-1}(\mathcal{N})$ is $\delta
$-$r$-hyperideal of $\Re$.
\end{theorem}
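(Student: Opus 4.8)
The plan is to check, directly from the definition, the two requirements that $\varphi^{-1}(N)$ must satisfy: that it is a proper hyperideal of $\Re$, and that $a\cdot b\in\varphi^{-1}(N)$ with $ann(a)=0$ forces $b\in\delta(\varphi^{-1}(N))$. First I would record the standard fact that, because $\varphi$ is a good homomorphism, the preimage of a hyperideal is again a hyperideal, so $\varphi^{-1}(N)$ is a hyperideal of $\Re$; properness follows since $\varphi(1)=1\notin N$ (as $N$ is proper) gives $1\notin\varphi^{-1}(N)$. These preliminaries I expect to be routine.

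For the defining implication, suppose $a\cdot b\in\varphi^{-1}(N)$ with $ann(a)=0$. Applying $\varphi$ and using goodness, $\varphi(a)\cdot\varphi(b)=\varphi(a\cdot b)\in N$. To use that $N$ is a $\delta$-$r$-hyperideal of $S$, I need the regularity condition $ann(\varphi(a))=0$ in $S$. Granting this for the moment, the definition applied in $S$ yields $\varphi(b)\in\delta(N)$, hence $b\in\varphi^{-1}(\delta(N))$. The last step is where the hypothesis that $\delta$ is global enters: it gives $\varphi^{-1}(\delta(N))=\delta(\varphi^{-1}(N))$, so $b\in\delta(\varphi^{-1}(N))$, which is exactly what is required. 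This single use of globalness is the only place that hypothesis is needed.

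The hard part will be the transfer of regularity, namely showing $ann_{\Re}(a)=0\Rightarrow ann_{S}(\varphi(a))=0$. Given $s\in S$ with $s\cdot\varphi(a)=0$, surjectivity of $\varphi$ provides $r\in\Re$ with $\varphi(r)=s$, and goodness gives $\varphi(r\cdot a)=0$, i.e.\ $r\cdot a\in\ker\varphi$. The delicate point is that this yields only $r\cdot a\in\ker\varphi$ rather than $r\cdot a=0$, so regularity of $a$ does not by itself force $s=0$; the implication cannot be read off from surjectivity alone and must be drawn from the full strength of the good-epimorphism hypothesis (in effect, that regular elements of $\Re$ are carried to regular elements of $S$). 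I therefore expect this regularity-preservation to be the crux of the argument, and I would isolate and justify it carefully before assembling the chain $\varphi(a)\cdot\varphi(b)\in N\Rightarrow\varphi(b)\in\delta(N)\Rightarrow b\in\varphi^{-1}(\delta(N))=\delta(\varphi^{-1}(N))$; once that step is secured, the rest of the proof is immediate.
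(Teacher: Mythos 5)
Your overall route is the same as the paper's: apply $\varphi$, invoke the hypothesis on $N$ inside $S$, and pull back through $\varphi^{-1}(\delta(N))=\delta(\varphi^{-1}(N))$ using globalness. The step you single out as the crux --- that $ann_{\Re}(a)=0$ forces $ann_{S}(\varphi(a))=0$ --- is exactly where the paper's proof is silent: it simply writes ``$\varphi(a)\cdot\varphi(b)\in N$ and $ann(\varphi(a))=0$'' with no justification. So you have correctly located the only nontrivial point.

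However, your hope that this implication can be ``drawn from the full strength of the good-epimorphism hypothesis'' is not realizable: a good epimorphism need not carry regular elements to regular elements, and in fact the theorem as stated fails. Take $\Re=\mathbb{Z}$ and $S=\mathbb{Z}/4\mathbb{Z}$ (ordinary commutative rings, hence Krasner hyperrings with singleton hyperaddition), let $\varphi$ be the quotient map, $\delta=\delta_{0}$ (which is global), and $N=\{0\}$. Then $N$ is an $r$-hyperideal of $S$, since the only regular elements of $\mathbb{Z}/4\mathbb{Z}$ are the units $1,3$; but $\varphi^{-1}(N)=4\mathbb{Z}$ is not an $r$-hyperideal of $\mathbb{Z}$: indeed $2\cdot 2\in 4\mathbb{Z}$ and $ann(2)=0$, yet $2\notin 4\mathbb{Z}=\delta_{0}(4\mathbb{Z})$. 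The failure occurs precisely at your flagged step: $2$ is regular in $\mathbb{Z}$ while $\varphi(2)$ is a zero divisor in $\mathbb{Z}/4\mathbb{Z}$, because $r\cdot a\in\ker\varphi$ does not give $r\cdot a=0$ when $\ker\varphi\neq 0$. So the gap you isolated cannot be closed without an additional hypothesis (for instance that $\varphi$ preserves regularity, or that $\ker\varphi=0$), and the same gap invalidates the paper's own argument; your writeup is more honest than the paper here, but it does not, and cannot, complete the proof as stated.
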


\begin{proof}
Let $a\cdot b\in\varphi^{-1}(\mathcal{N})$ and $ann(a)=0,$ for $a,b\in\Re.$ Then,
$\varphi(a\cdot b)=\varphi(a)\cdot\varphi(b)\in \mathcal{N}$ and $ann(\varphi(a))=0.$
Since $\mathcal{N}$ is a $\delta$-$r$-hyperideal, then $\varphi(b)\in\delta(\mathcal{N}).$ Thus,
$b\in\varphi^{-1}(\delta(\mathcal{N}))=\delta(\varphi^{-1}(\mathcal{N})).$ Consequently,
$\varphi^{-1}(\mathcal{N})$ is $\delta$-$r$-hyperideal of $\Re.$
\end{proof}

\begin{theorem}
Let $\varphi:\Re\rightarrow S$ be a good epimorphism and $\mathcal{N}$ be a hyperideal
such that $\ker(\varphi)\subseteq \mathcal{N}$. Then, $\mathcal{N}$ is $\delta$-primary
r-hyperideal of $S$ if and only if $\varphi(\mathcal{N})$ is $\delta$-primary
r-hyperideal of $\Re.$
\end{theorem}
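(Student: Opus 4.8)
The plan is to transfer the defining condition across $\varphi$ in both directions, using three facts about a good epimorphism $\varphi$ with $\ker(\varphi)\subseteq N$: first, the fundamental correspondence $\varphi^{-1}(\varphi(N))=N$; second, that $\varphi$ carries the regular-element condition in both directions, i.e. $ann(a)=0$ in $\Re$ if and only if $ann(\varphi(a))=0$ in $S$ (this is exactly the point already invoked in the earlier theorem on $\varphi^{-1}(N)$); and third, that since $\delta$ is global it commutes with $\varphi$, so $\varphi(\delta(N))=\delta(\varphi(N))$ and $\varphi^{-1}(\delta(M))=\delta(\varphi^{-1}(M))$. I would record at the outset that $\ker(\varphi)\subseteq N\subseteq\delta(N)$, which is what lets me pull elements of $\varphi(\delta(N))$ back into $\delta(N)$.

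For the direction assuming $N$ is a $\delta$-$r$-hyperideal, I would take $s,t\in S$ with $s\cdot t\in\varphi(N)$ and $ann(s)=0$. Writing $s=\varphi(a)$ and $t=\varphi(b)$ by surjectivity, I get $\varphi(a\cdot b)=\varphi(a)\cdot\varphi(b)\in\varphi(N)$, whence $a\cdot b\in\varphi^{-1}(\varphi(N))=N$, together with $ann(a)=0$ from the regular-element transfer. The hypothesis on $N$ then yields $b\in\delta(N)$, so $t=\varphi(b)\in\varphi(\delta(N))=\delta(\varphi(N))$, as required.

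For the converse, assume $\varphi(N)$ is a $\delta$-$r$-hyperideal and take $a,b\in\Re$ with $a\cdot b\in N$ and $ann(a)=0$. Then $\varphi(a)\cdot\varphi(b)=\varphi(a\cdot b)\in\varphi(N)$ and $ann(\varphi(a))=0$, so by hypothesis $\varphi(b)\in\delta(\varphi(N))=\varphi(\delta(N))$. Hence $b\in\varphi^{-1}(\varphi(\delta(N)))=\delta(N)$, the last equality because $\ker(\varphi)\subseteq\delta(N)$. Thus $N$ is a $\delta$-$r$-hyperideal, closing the equivalence.

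The routine parts are the two pushforward/pullback computations; the step I expect to be the real obstacle is justifying the regular-element transfer $ann(a)=0\Leftrightarrow ann(\varphi(a))=0$. The inclusion $ann(a)=0\Rightarrow ann(\varphi(a))\subseteq\ker(\varphi)$ is immediate, but upgrading this to $ann(\varphi(a))=0$, and conversely controlling zero-divisors that $\varphi$ may create or collapse, is precisely where the \emph{good epimorphism} hypothesis together with the compatibility of $ann$ with $\varphi$ must be used carefully. This is the same subtlety underlying the earlier inverse-image theorem, and I would isolate it as a separate lemma rather than re-deriving it inline.
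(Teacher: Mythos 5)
Your route is essentially the paper's: the forward direction lifts $s\cdot t\in\varphi(N)$ through surjectivity to $x\cdot y\in\varphi^{-1}(\varphi(N))=N$ and pushes $\delta(N)$ forward via globality, and your converse simply inlines the paper's preceding theorem on inverse images of $\delta$-$r$-hyperideals, which is exactly how the paper handles that direction. The bookkeeping with $\ker\varphi\subseteq N\subseteq\delta(N)$ is fine, and you are right to silently read the statement as ``$N$ a hyperideal of $\Re$, $\varphi(N)$ one of $S$'' (the published statement has the two rings swapped).

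However, the one step you defer to a lemma --- the transfer $ann(a)=0\Leftrightarrow ann(\varphi(a))=0$ --- is not something you can supply later: it is false in both directions whenever $\ker\varphi\neq 0$, and this is a genuine gap (one your proposal shares with the paper, but a gap nonetheless). For the implication $ann(\varphi(a))=0\Rightarrow ann(a)=0$, needed in the forward direction: a nonzero $c$ with $a\cdot c=0$ produces a contradiction only if $\varphi(c)\neq 0$, and nothing rules out $c\in\ker\varphi$; e.g.\ for $\mathbb{Z}_6\rightarrow\mathbb{Z}_3$ (ordinary rings are Krasner hyperrings with singleton hyperaddition), $a=2$ has $ann(2)=\{0,3\}\neq 0$ upstairs while $\varphi(2)$ is a unit downstairs. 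The paper's proof simply asserts ``Since $\varphi(c)\neq0$'' at this point with no justification. For the implication $ann(a)=0\Rightarrow ann(\varphi(a))=0$, needed in your converse: from $\varphi(a)\cdot\varphi(x)=0$ with $\varphi(x)\neq 0$ you only obtain $a\cdot x\in\ker\varphi$, not $a\cdot x=0$ (take $\mathbb{Z}\rightarrow\mathbb{Z}_6$ and $a=2$); the earlier inverse-image theorem you cite as already containing this point commits exactly this error. So the equivalence as stated requires an additional hypothesis (for instance that $\ker\varphi$ contains no nonzero annihilating element, or that $\varphi$ preserves and reflects regular elements by assumption), and without it neither your argument nor the paper's closes.
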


\begin{proof}
If $\varphi(\mathcal{N})$ is $\delta$-$r$-hyperideal, since $\mathcal{N}=\varphi^{-1}(\varphi(\mathcal{N}))$,
by the previous theorem, then $\mathcal{N}$ is $\delta$-$r$-hyperideal.

Let $a,b\in S,$ $a\cdot b\in$ $\varphi(\mathcal{N})$ and $ann(a)=0.$ There are
$x,y\in\Re$ such that $\varphi(x)=a$ and $\varphi(y)=b.$ Since $a\cdot
b=\varphi(x)\cdot$ $\varphi(y)=\varphi(x\cdot y)\in$ $\varphi(\mathcal{N}),$ then $x\cdot
y\in\varphi^{-1}(\varphi(\mathcal{N}))=\mathcal{N}.$ We need to show that $ann(x)=0.$ Let us suppose that
$ann(x)\neq0.$ After that, there is $0\neq c\in\Re$ such that $x\cdot c=0.$
Then, $\varphi(x\cdot c)=\varphi(x)\cdot\varphi(c)=\varphi(0)=0.$ Since
$\varphi(c)\neq0,$ then $ann(\varphi(x))=ann(a)=0.$ This is a contradiction. Thus
$ann(x)=0.$ Since $\mathcal{N}$ is $\delta$-$r$-hyperideal, then $y\in\delta(\mathcal{N}).$ Hence
$b=\varphi(y)\in\varphi(\delta(\mathcal{N}))=\delta(\varphi(\mathcal{N})),$ since $\varphi$ is
onto$.$ Therefore, $\varphi(\mathcal{N})$ is $\delta$-$r$-hyperideal.
\end{proof}

\section{$\phi$-$\delta$-$r$-hyperideals}

We define the reduction of hyperideals to be a function $\phi$ that maps
each hyperideal $\mathcal{N}$ of $\Re$ to another hyperideal $\phi(\mathcal{N})$ of the same
ring $\Re$, satisfying the following axioms:

$i)$ For all hyperideals $\mathcal{N}$ of $\Re,$ $\phi(\mathcal{N})\subseteq \mathcal{N}$.

$ii)$ If $\mathcal{N}\subseteq \mathcal{M}$, where $\mathcal{N}$ and $\mathcal{M}$ are hyperideals of $\Re$, then
$\phi(\mathcal{N})\subseteq\phi(\mathcal{M}).$

\begin{definition}
Let $\delta$ be a hyperideal expansion and $\phi$ a hyperideal reduction. A
hyperideal $\mathcal{N}$ of $\Re$ is called a $\phi$-$\delta$-primary hyperideal if
$a\cdot b\in \mathcal{N}-\phi(\mathcal{N}),$ then $a\in \mathcal{N}$ or $b\in\delta(\mathcal{N}),$ for all $a,b\in
\Re.$
\end{definition}

\begin{definition}
Let $\delta$ be a hyperideal expansion, $\phi$ be a hyperideal reduction. A
proper hyperideal $\mathcal{N}$ of $\Re$ is called a $\phi$-$\delta$-r-hyperideal if
$a\cdot b\in \mathcal{N}-\phi(\mathcal{N})$ with $ann(a)=0$ implies $b\in\delta(\mathcal{N}),$ for all
$a,b\in\Re.$
\end{definition}

Let $\phi$ be a hyperideal reduction and $\delta$ be a hyperideal expansion. Then:

$\mathcal{N}$ is $\phi$-$\delta_{0}$-$r$-hyperideal $\Leftrightarrow \mathcal{N}$ is $\phi$-$r$-hyperideal.

$\mathcal{N}$ is $\phi$-$\delta_{1}$-$r$-hyperideal $\Leftrightarrow$ $\mathcal{N}$ is $\phi$-$pr$-hyperideal.

$\mathcal{N}$ is $\delta$-$r$-hyperideal $\Leftrightarrow \mathcal{N}$ is $\phi_{\emptyset}%
$-$\delta$-$r$-hyperideal.

$\mathcal{N} $ is $r$-hyperideal $\Leftrightarrow \mathcal{\mathcal{N}}$ is $\phi_{\emptyset}$-$\delta_{0}%
$-$r$-hyperideal.

$\mathcal{\mathcal{N}} $ is $pr$-hyperideal $\Leftrightarrow \mathcal{N}$ is $\phi_{\emptyset}$-$\delta_{1}%
$-$r$-hyperideal.

\begin{remark}
(1) If $\delta,\gamma$ are two hyperideal expansions with $\delta
\leq\gamma$ and $\phi$ is any
hyperideal reduction, then every $\phi$-$\delta$-$r$-hyperideal of $\Re$ is $\phi$-$\gamma$-$r$-hyperideal.

(2) If $\delta_{1},...,\delta_{n}$ are hyperideal expansions, then
$\delta=\bigcap\limits_{i=1}^{n}$ $\delta_{i}$ is also a hyperideal expansion.
\end{remark}

\begin{proposition}
Let $\{M_{i}:i\in D\}$ be a directed set of $\phi$-$\delta$-$r$-hyperideals of
$\Re$, where $\phi$ is a hyperideal reduction and $\delta$ is a hyperideal
expansion. Then the hyperideal $\mathcal{M}=\bigcup\limits_{i\in D}M_{i}$ is $\phi
$-$\delta$-$r$-hyperideal.
\end{proposition}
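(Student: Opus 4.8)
The plan is to verify the defining implication of a $\phi$-$\delta$-$r$-hyperideal directly for the union $M=\bigcup_{i\in D}M_{i}$, reducing every instance to a single member $M_{i}$ and then transporting the conclusion back to $M$ through the monotonicity of $\phi$ and $\delta$. Before that I would record that $M$ is a proper hyperideal: directedness supplies, for $a\in M_{i}$ and $b\in M_{j}$, an index $k$ with $M_{i},M_{j}\subseteq M_{k}$, so that $a+b\subseteq M_{k}\subseteq M$ and multiplicative absorption passes from any single $M_{i}$; properness holds because $1\in M$ would force $1\in M_{i}$ for some $i$, contradicting that each $M_{i}$ is proper. Here I use that in a Krasner hyperring the product $a\cdot b$ is a single element, since $(\Re,\cdot)$ is a semigroup, so membership in the union picks out one index.

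For the main condition I would take $a,b\in\Re$ with $a\cdot b\in M-\phi(M)$ and $ann(a)=0$. Since $a\cdot b\in M=\bigcup_{i\in D}M_{i}$, first I would fix an index $i\in D$ with $a\cdot b\in M_{i}$. The crucial step is then to upgrade this to $a\cdot b\in M_{i}-\phi(M_{i})$: from $M_{i}\subseteq M$ and the reduction axiom I obtain $\phi(M_{i})\subseteq\phi(M)$, and combining this with $a\cdot b\notin\phi(M)$ forces $a\cdot b\notin\phi(M_{i})$. Applying the $\phi$-$\delta$-$r$-hyperideal property of $M_{i}$ then yields $b\in\delta(M_{i})$, and finally the expansion axiom applied to $M_{i}\subseteq M$ gives $\delta(M_{i})\subseteq\delta(M)$, whence $b\in\delta(M)$, which is exactly the required conclusion.

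I expect the only point requiring care to be the implication $a\cdot b\notin\phi(M)\Rightarrow a\cdot b\notin\phi(M_{i})$, where it is essential to apply the monotonicity of $\phi$ in the correct direction, namely from the smaller hyperideal $M_{i}$ to the larger $M$. The role of directedness is essentially confined to guaranteeing that $M$ is a hyperideal; once that is in hand, the argument is a clean two-fold use of the monotonicity built into the definitions of reduction and expansion, with no genuine computation involved.
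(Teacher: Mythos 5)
Your proof is correct and follows essentially the same route as the paper: locate an index $i$ with $a\cdot b\in M_{i}$, pass to $M_{i}-\phi(M_{i})$, apply the $\phi$-$\delta$-$r$-property of $M_{i}$, and push $b\in\delta(M_{i})$ into $\delta(M)$ by monotonicity of $\delta$. In fact you supply the one justification the paper leaves implicit (and garbles in a tautological sentence), namely that $\phi(M_{i})\subseteq\phi(M)$ by monotonicity of the reduction, so $a\cdot b\notin\phi(M)$ really does force $a\cdot b\notin\phi(M_{i})$.
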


\begin{proof}
Let $a\cdot b\in \mathcal{M}-\phi(\mathcal{M})$ with $ann(a)=0,$ for $a,b\in\Re.$ Since $a\cdot
b\notin\phi(\mathcal{M})$, then $a\cdot b\notin\phi(\mathcal{M}).$ Let $i\in D$ such that $a\cdot b\in
M_{i}-\phi(M_{i})$. Since $ann(a)=0$ and $M_{i}$ is a $\phi$-$\delta
$-r-hyperideal, then $b\in\delta(M_{i})\subseteq\delta(\mathcal{M}).$ Hence, $\mathcal{M}$ is $\phi
$-$\delta$-$r$-hyperideal.
\end{proof}

\begin{theorem}
Let $\mathcal{N}$ be a proper hyperideal of $\Re.$ Suppose that $\phi$ is hyperideal
reduction and $\delta$ is a hyperideal expansion. Then the following statements are equivalent:

i) $ \mathcal{N}$ is $\phi$-$\delta$-$r$-hyperideal;

ii) $(\mathcal{N}:a)\subseteq\delta(\mathcal{N})\cup(\phi(\mathcal{N}):a),$ for $a\in r(\Re);$

iii) $(\mathcal{N}:\mathcal{P})\subseteq\delta(\mathcal{N})\cup(\phi(\mathcal{N}):\mathcal{P}),$ $\mathcal{P}$ is a hyperideal of
$\Re$ such that $\mathcal{P}\cap r(\Re)\nsubseteq \mathcal{N}.$
\end{theorem}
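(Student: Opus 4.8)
The plan is to establish the cycle of implications (i)$\Rightarrow$(ii)$\Rightarrow$(iii)$\Rightarrow$(i), following the pattern of the earlier characterisation of $\delta$-$r$-hyperideals but keeping careful track of the extra $(\phi(N):-)$ terms that the reduction $\phi$ introduces. The two ``short'' implications I would dispatch first. For (i)$\Rightarrow$(ii), fix $a\in r(\Re)$ and take $x\in(N:a)$, so that $a\cdot x\in N$; the key move is a dichotomy on whether $a\cdot x$ lands in $\phi(N)$. If $a\cdot x\in\phi(N)$ then $x\in(\phi(N):a)$ by definition, while if $a\cdot x\notin\phi(N)$ then $a\cdot x\in N-\phi(N)$ with $ann(a)=0$, so (i) forces $x\in\delta(N)$; in either case $x\in\delta(N)\cup(\phi(N):a)$. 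For (iii)$\Rightarrow$(i), given $a\cdot b\in N-\phi(N)$ with $ann(a)=0$, set $P=\langle a\rangle$, so $a\in P\cap r(\Re)$ and (in the nondegenerate case $a\notin N$) $P\cap r(\Re)\nsubseteq N$; since $a\cdot b\in N$ we get $b\in(N:P)$, and (iii) yields $b\in\delta(N)\cup(\phi(N):P)$. If $b$ lay in $(\phi(N):P)$ then $a\cdot b\in\phi(N)$, contradicting $a\cdot b\notin\phi(N)$, so $b\in\delta(N)$ as required. The case $a\in N$ (where $\langle a\rangle\subseteq N$ makes this $P$ unavailable) is degenerate and would be disposed of separately.

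The implication (ii)$\Rightarrow$(iii) is where the real work lies, and I expect it to be the main obstacle. Let $P$ be a hyperideal with $P\cap r(\Re)\nsubseteq N$ and fix a regular $a\in P$ with $a\notin N$; for $x\in(N:P)$ we immediately get $x\in(N:a)\subseteq\delta(N)\cup(\phi(N):a)$ from (ii). The difficulty is that the right-hand side is indexed by the single element $a$, whereas the target inclusion requires $x\in\delta(N)\cup(\phi(N):P)$, that is, $x\cdot p\in\phi(N)$ for \emph{every} $p\in P$, including non-regular $p$ to which (ii) does not directly apply. I would assume $x\notin\delta(N)$, so that $x\cdot a\in\phi(N)$, and then transfer this to an arbitrary $p\in P$ through the canonical-hypergroup structure: choosing $c\in a+p\subseteq P$, distributivity gives $x\cdot c\in x\cdot a+x\cdot p$, and the reversibility axiom (v) yields $x\cdot p\in x\cdot c-x\cdot a$. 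Since $x\cdot a\in\phi(N)$ and $\phi(N)$ is a hyperideal (closed under opposites and hyperaddition), it then suffices to know $x\cdot c\in\phi(N)$, which follows by applying (ii) to $c$ once $c$ is regular; the two memberships $x\cdot c,\,x\cdot a\in\phi(N)$ force $x\cdot p\in\phi(N)$, giving $x\in(\phi(N):P)$.

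The crux, and the step I would scrutinise most, is precisely the production of a \emph{regular} representative $c\in a+p$ so that (ii) is available for $c$. This is exactly the point at which one needs $\Re$ to contain enough regular elements: for a general canonical hypergroup the coset $a+p$ of a regular element $a$ need not meet $r(\Re)$ (for instance $a+(-a)$ typically contains only $0$), so the transfer from the regular witness $a$ to an arbitrary $p$ can fail without an additional hypothesis in the spirit of the strong annihilator condition used earlier in the paper. I would therefore flag this regularity-of-$c$ issue as the delicate gap to be closed — either by invoking such a condition or by restricting the class of hyperrings — after which the reversibility identity $x\cdot p\in x\cdot c-x\cdot a$ closes the argument and completes the cycle.
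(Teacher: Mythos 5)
Your two ``outer'' implications coincide with what the paper actually writes: for (i)$\Rightarrow$(ii) the paper uses exactly your dichotomy on whether $x\cdot a$ lands in $\phi(N)$, and for (iii)$\Rightarrow$(i) the paper implicitly takes $P=\langle x\rangle$ for the regular factor, obtains membership in $(N:P)-(\phi(N):P)$, and concludes membership in $\delta(N)$ --- the same argument as yours, and with the same unaddressed degenerate case (the paper silently assumes the regular factor lies outside $N$, whereas you at least flag that the case where it lies in $N$ needs separate treatment, since then $\langle a\rangle\cap r(\Re)\subseteq N$ and hypothesis (iii) is unavailable).

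The substantive difference is (ii)$\Rightarrow$(iii), which the paper dismisses with ``It is obvious.'' Your diagnosis is correct and this is the valuable part of your write-up: the obvious chain $(N:P)\subseteq(N:a)\subseteq\delta(N)\cup(\phi(N):a)$ runs the colon inclusion in the wrong direction, since $(\phi(N):P)\subseteq(\phi(N):a)$ and not conversely; so one really must produce $x\cdot p\in\phi(N)$ for \emph{every} $p\in P$, and (ii) only controls regular $p$. (Note that for the $\delta$-only analogue, Theorem 2.5, the step genuinely is obvious because the target $\delta(N)$ does not depend on $a$; the difficulty is created precisely by the $\phi$ term, and the paper carried the word ``obvious'' over without noticing.) Your repair --- pick $c\in a+p\subseteq P$, get $x\cdot c\in\phi(N)$ from (ii), and recover $x\cdot p\in x\cdot c - x\cdot a\subseteq\phi(N)$ by distributivity, reversibility, and closure of the hyperideal $\phi(N)$ --- is sound \emph{whenever} $a+p$ meets $r(\Re)$, and your flagged gap (that it need not) is real; nothing in the paper supplies the missing regularity, so you should not expect to close this by comparison with the source. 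Two small remarks: the cycle can be closed without (iii) at all, since (ii)$\Rightarrow$(i) is immediate (if $a\cdot b\in N-\phi(N)$ with $a$ regular then $b\in(N:a)$ but $b\notin(\phi(N):a)$, forcing $b\in\delta(N)$), which shows the obstruction is intrinsic to statement (iii) rather than to your chosen ordering of implications; and in your transfer step you only need the trick for non-regular $p\in P$, since for regular $p$ one applies (ii) to $p$ directly.
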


\begin{proof}
(i)$\Rightarrow$(ii) Let $x\in(\mathcal{N}:a).$ Then $x\cdot a\in \mathcal{N}.$ If $x\cdot
a\in\phi(\mathcal{N}),$ then $x\in(\phi(\mathcal{N}):a).$ If $x\cdot a\notin\phi(\mathcal{N}),$ then
$x\in\delta(\mathcal{N}).$

(ii)$\Rightarrow$(iii) It is obvious.

(iii)$\Rightarrow$(i) Let $a\cdot x\in \mathcal{N}-\phi(\mathcal{N})$ and $\mathcal{P}\cap r(\Re)\nsubseteq
\mathcal{N}.$ Then for an element $x$ of $\Re,$ we have $x\in \mathcal{P}\cap r(\Re)$ and $x\notin \mathcal{N}.$
This means $ann(x)=0$ and $a\in(\mathcal{N}:\mathcal{P})-(\phi(\mathcal{N}):\mathcal{P}).$ Since $a\notin(\phi(\mathcal{N}):\mathcal{P}),$ then
$a\in$ $\delta(\mathcal{N}).$
\end{proof}

\begin{theorem}
Let $\Re$ satisfy the strong annihilator condition and $\mathcal{N}$ be a proper
hyperideal. $\mathcal{P}$ is $\phi$-$\delta$-$r$-hyperideal if and only if $\mathcal{N}\cdot
\mathcal{M}\subseteq \mathcal{P}-\phi(\mathcal{N})$ and $ann(\mathcal{N})=0$ implies $\mathcal{M}\subseteq\delta(\mathcal{P})$ where $\mathcal{N}$
is finitely generated hyperideal and $\mathcal{M}$ is a hyperideal of $\Re.$
\end{theorem}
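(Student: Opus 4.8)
The statement is the $\phi$-analogue of the earlier characterisation of $\delta$-$r$-hyperideals under the strong annihilator condition, so the plan is to reuse that proof and carry the reduction $\phi$ through each step. Two preliminary remarks fix the setting. First, in a Krasner hyperring the multiplication is single-valued (it is a semigroup operation), so every product $a\cdot b$ is a genuine element and the membership $a\cdot b\in P-\phi(P)$ makes literal sense. Second, the strong annihilator condition is exactly what lets one descend from a finitely generated hyperideal to a single element: if $N$ is finitely generated and $ann(N)=0$, it provides $a\in N$ with $ann(a)=ann(N)=0$. (I also read the difference in the hypothesis as $P-\phi(P)$; the $\phi(N)$ printed in the statement is a misprint, since $\phi$-$\delta$-$r$-ness of $P$ is defined through $\phi(P)$.)

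For the forward implication I would argue by contradiction, copying the first half of the earlier proof. Assume $P$ is $\phi$-$\delta$-$r$-hyperideal, $N$ is finitely generated with $ann(N)=0$, $N\cdot M\subseteq P-\phi(P)$, but $M\nsubseteq\delta(P)$. By the strong annihilator condition pick $a\in N$ with $ann(a)=0$, and pick $b\in M-\delta(P)$. Then $a\cdot b\in N\cdot M\subseteq P-\phi(P)$, so $a\cdot b\in P-\phi(P)$ with $ann(a)=0$, and the definition of a $\phi$-$\delta$-$r$-hyperideal forces $b\in\delta(P)$, contradicting $b\notin\delta(P)$. Hence $M\subseteq\delta(P)$.

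For the converse I would specialise to principal hyperideals. Suppose $a\cdot b\in P-\phi(P)$ with $ann(a)=0$. Take $N=\langle a\rangle$ and $M=\langle b\rangle$; then $\langle a\rangle\cdot\langle b\rangle=\langle a\cdot b\rangle\subseteq P$, the hyperideal $\langle a\rangle$ is finitely generated, and $ann(\langle a\rangle)=ann(a)=0$. Applying the assumed implication gives $\langle b\rangle\subseteq\delta(P)$, hence $b\in\delta(P)$, so $P$ is $\phi$-$\delta$-$r$-hyperideal.

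The step I expect to require the most care is the exact meaning of the difference $P-\phi(P)$ in the hypothesis, and this is the genuine obstacle. Read elementwise, ``$N\cdot M\subseteq P-\phi(P)$'' is convenient for the forward direction but breaks the converse: from $a\cdot b\notin\phi(P)$ one controls only the generator, whereas $\langle a\cdot b\rangle=\langle a\rangle\cdot\langle b\rangle$ contains $0\in\phi(P)$, so it is never contained in $P-\phi(P)$ elementwise. Read instead as ``$N\cdot M\subseteq P$ with $N\cdot M\nsubseteq\phi(P)$'', the converse goes through (since $a\cdot b\in\langle a\cdot b\rangle$ witnesses $\langle a\cdot b\rangle\nsubseteq\phi(P)$), but then the forward direction must be adjusted, because choosing a regular $a\in N$ and some $b\in M-\delta(P)$ no longer guarantees that this particular product avoids $\phi(P)$. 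The delicate part is therefore to fix one interpretation and keep the $\phi$-test pinned to a single distinguished product $a\cdot b$ throughout, so that both directions are driven by the same instance of the defining condition; once that bookkeeping is settled the rest is the routine transcription of the $\delta$-$r$ argument above.
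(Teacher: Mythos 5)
Your proof is essentially identical to the paper's: the forward direction is the same argument by contradiction, using the strong annihilator condition to pick $a\in N$ with $ann(a)=ann(N)=0$ and $b\in M-\delta(P)$, and the converse passes to the principal hyperideals $\langle a\rangle$ and $\langle b\rangle$ in exactly the same way. The paper likewise writes $\phi(N)$ where $\phi(P)$ is evidently intended, and its converse simply asserts $\langle a\rangle\cdot\langle b\rangle\subseteq P-\phi(N)$ from $a\cdot b\in P-\phi(N)$ without comment --- precisely the elementwise set-difference problem you flag (since $0$ lies in both $\phi(P)$ and $\langle a\rangle\cdot\langle b\rangle$) --- so the reservation you raise points to a genuine defect in the paper's own argument rather than a gap peculiar to yours.
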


\begin{proof}
Assume that $\mathcal{P}$ is $\phi$-$\delta$-$r$-hyperideal, $\mathcal{N}\cdot \mathcal{M}\subseteq
\mathcal{P}-\phi(\mathcal{N}),$ $ann(\mathcal{N})=0$ and $\mathcal{M}\nsubseteq\delta(\mathcal{P}).$ Then there exists $a\in
\mathcal{N},$ such that $ann(a)=ann(\mathcal{N})=0$ and $b\in \mathcal{M}-\delta(\mathcal{P}).$ Thus, $a\cdot b\in
\mathcal{N}\cdot \mathcal{M}\subseteq \mathcal{P}-\phi(\mathcal{N}),$ $ann(a)=0$ and $b\notin\delta(\mathcal{P}).$ This is a contradiction.

Conversely, let $a\cdot b\in \mathcal{P}-\phi(\mathcal{N})$ and $ann(a)=0.$ Then $<a>\cdot
<b>\subseteq \mathcal{P}-\phi(\mathcal{N})$ and $ann(<a>)=0.$ Because of the assumption,
$<b>\subseteq\delta(\mathcal{P}).$ Thus, $b\in\delta(\mathcal{P}).$ Hence, $\mathcal{P}$ is $\phi$-$\delta
$-$r$-hyperideal.
\end{proof}

\begin{theorem}
Let $\phi$ be hyperideal reduction and $\delta$ be a hyperideal expansion. If
$\mathcal{P}$ is a $\phi$-$\delta$-$r$-hyperideal of $\Re$ and $\phi(\mathcal{P}:a)=(\phi(\mathcal{P}):a),$
for $a\in\Re,$ then $(\mathcal{P}:a)$ is also $\phi$-$\delta$-$r$-hyperideal of $\Re.$
\end{theorem}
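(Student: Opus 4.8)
The plan is to verify the defining property of a $\phi$-$\delta$-$r$-hyperideal for the quotient $(P:n)$ directly. So I would fix arbitrary $a,b\in\Re$ with $a\cdot b\in (P:n)-\phi((P:n))$ and $ann(a)=0$, and aim to deduce $b\in\delta((P:n))$. The first move is to translate both membership conditions into statements about $P$ itself by multiplying through by $n$. Since multiplication in a Krasner hyperring is single-valued and $(P:n)=\{x\in\Re: x\cdot n\in P\}$, the condition $a\cdot b\in (P:n)$ is exactly $(a\cdot b)\cdot n\in P$, which by associativity and commutativity of the multiplicative semigroup I rewrite as $a\cdot(b\cdot n)\in P$.

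Next I would use the hypothesis $\phi((P:n))=(\phi(P):n)$ to handle the $\phi$-part. From $a\cdot b\notin\phi((P:n))=(\phi(P):n)$ I obtain $(a\cdot b)\cdot n\notin\phi(P)$, that is, $a\cdot(b\cdot n)\notin\phi(P)$. Combining this with the previous step gives $a\cdot(b\cdot n)\in P-\phi(P)$, and since $ann(a)=0$ and $P$ is a $\phi$-$\delta$-$r$-hyperideal by assumption, the defining property of $P$ yields $b\cdot n\in\delta(P)$.

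The last step is to pass from $b\cdot n\in\delta(P)$ to $b\in\delta((P:n))$, and this is where the real difficulty lies. Equivalently, $b\cdot n\in\delta(P)$ says $b\in(\delta(P):n)$, so what is needed is the containment $(\delta(P):n)\subseteq\delta((P:n))$. The natural starting observation is that $P\subseteq (P:n)$ — because $P$ is a hyperideal, $P\cdot n\subseteq P$, so every element of $P$ lies in $(P:n)$ — whence monotonicity of the expansion $\delta$ gives $\delta(P)\subseteq\delta((P:n))$, and in particular $b\cdot n\in\delta((P:n))$. I expect the main obstacle to be completing the descent from $b\cdot n$ to $b$: monotonicity alone only delivers $b\cdot n\in\delta((P:n))$, whereas to conclude $b\in\delta((P:n))$ one really wants $\delta$ to be compatible with the colon, $(\delta(P):n)\subseteq\delta((P:n))$, in the same spirit as the hypothesis imposed on $\phi$. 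I would therefore isolate this colon-compatibility as the key point and justify it either from an assumed property of $\delta$ (that it commutes with quotients) or, for the standard expansions $\delta_{0},\delta_{1},\delta_{r}$ of the examples, by a direct check; with that containment in hand, $b\cdot n\in\delta(P)$ forces $b\in\delta((P:n))$ and the argument closes, showing $(P:n)$ is $\phi$-$\delta$-$r$-hyperideal.
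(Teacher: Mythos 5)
Your reduction to $P$ is set up correctly, but the way you group the triple product leaves a gap that your own hypotheses cannot close. By writing $a\cdot(b\cdot n)\in P-\phi(P)$ and applying the $\phi$-$\delta$-$r$ property of $P$ with the regular element $a$, you land on $b\cdot n\in\delta(P)$, and you then need the colon-compatibility $(\delta(P):n)\subseteq\delta((P:n))$ to finish. As you yourself observe, this is not a consequence of the two axioms of an expansion (monotonicity only gives $\delta(P)\subseteq\delta((P:n))$, which is about $b\cdot n$, not $b$), and it is not among the hypotheses of the theorem — the statement only assumes the compatibility $\phi((P:n))=(\phi(P):n)$ for the \emph{reduction}, not for the expansion. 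Even for the concrete expansion $\delta_{1}(N)=\sqrt{N}$ the containment is dubious: $b\cdot n\in\sqrt{P}$ gives $b^{m}n^{m}\in P$, i.e.\ $b^{m}\in(P:n^{m})$, which is not $b^{m}\in(P:n)$. So ``isolate and justify the colon-compatibility'' amounts to adding a hypothesis and proving a different theorem.

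The paper's proof avoids this by grouping the product the other way: it reads $a\cdot b\cdot n\in P-\phi(P)$ as $(a\cdot n)\cdot b\in P-\phi(P)$, asserts $ann(a\cdot n)=0$, and applies the defining property of $P$ with regular element $a\cdot n$, so the conclusion lands directly on $b\in\delta(P)$; then $P\subseteq(P:n)$ and monotonicity give $b\in\delta(P)\subseteq\delta((P:n))$, which is exactly the step your grouping cannot reach. You should be aware, though, that this regrouping is not free either: from $ann(a)=0$ one only gets $ann(a\cdot n)=ann(n)$, so the paper's claim $ann(a\cdot n)=0$ silently assumes $n$ is regular, a hypothesis absent from the statement. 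Still, the intended argument is the $(a\cdot n)\cdot b$ grouping; if you rewrite your proof that way, the only compatibility you need from the hypotheses is the one actually given for $\phi$, and the $\delta$ step becomes pure monotonicity.
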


\begin{proof}
Let $a\cdot b\in(\mathcal{P}:a)-\phi(\mathcal{P}:a)$ with $ann(x)=0.$ Then $x\cdot y\cdot a\in
\mathcal{P}-\phi(\mathcal{P})$ and $ann(x\cdot a)=0.$ Since $\mathcal{P}$ is $\phi$-$\delta$-$r$-hyperideal and
$\phi(\mathcal{P}:a)=(\phi(\mathcal{P}):a)$, then $b\in\delta(\mathcal{P})\subseteq\delta(\mathcal{P}:a).$ Hence, $(\mathcal{P}:a)$
is $\phi$-$\delta$-$r$-hyperideal of $\Re.$
\end{proof}

A reduction is said to be global if for any hyperring homomorphism
$\varphi:\Re\rightarrow S,$ $\phi(\varphi^{-1}(\mathcal{N}))=\varphi^{-1}(\phi(\mathcal{N})),$
for all $\mathcal{N}\in Id(S).$ For example, the reductions $\phi_{1}$ and $\phi_{2}$
are both global.

\begin{theorem}
\label{the9} Let $\phi$ be a hyperideal reduction and $\delta$ be a
hyperideal expansion, all of which are global. If $\varphi:\Re\rightarrow S$
is a good epimorphism, then for any $\phi$-$\delta$-$r$-hyperideal $\mathcal{N}$ of $S$,
$\varphi^{-1}(\mathcal{N})$ is a $\phi$-$\delta$-$r$-hyperideal of $\Re$.
\end{theorem}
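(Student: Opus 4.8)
The plan is to mirror the proof of the earlier preimage theorem (that $\varphi^{-1}(N)$ is a $\delta$-$r$-hyperideal), adding the bookkeeping required by the reduction $\phi$. So I would begin by taking $a,b\in\Re$ with $a\cdot b\in\varphi^{-1}(N)-\phi(\varphi^{-1}(N))$ and $ann(a)=0$, and I would aim to conclude $b\in\delta(\varphi^{-1}(N))$, which is exactly the defining condition for $\varphi^{-1}(N)$ to be a $\phi$-$\delta$-$r$-hyperideal.

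The first step is to transfer the membership condition through $\varphi$. Since $a\cdot b\in\varphi^{-1}(N)$, applying $\varphi$ gives $\varphi(a)\cdot\varphi(b)=\varphi(a\cdot b)\in N$. The crucial new ingredient, compared with the $\delta$-$r$ case, is the globality of $\phi$, which yields $\phi(\varphi^{-1}(N))=\varphi^{-1}(\phi(N))$; hence $a\cdot b\notin\phi(\varphi^{-1}(N))$ translates into $\varphi(a\cdot b)\notin\phi(N)$. Combining these, $\varphi(a)\cdot\varphi(b)\in N-\phi(N)$.

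The second step handles the annihilator. Exactly as in the earlier preimage theorem, I would use that the good epimorphism $\varphi$ carries the condition $ann(a)=0$ to $ann(\varphi(a))=0$. With $\varphi(a)\cdot\varphi(b)\in N-\phi(N)$, $ann(\varphi(a))=0$, and $N$ a $\phi$-$\delta$-$r$-hyperideal of $S$, I obtain $\varphi(b)\in\delta(N)$. Finally, invoking the globality of $\delta$, $b\in\varphi^{-1}(\delta(N))=\delta(\varphi^{-1}(N))$, which is precisely what is needed, so $\varphi^{-1}(N)$ is a $\phi$-$\delta$-$r$-hyperideal of $\Re$.

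The part I expect to be the main obstacle, and the step I would want to justify most carefully, is the implication $ann(a)=0\Rightarrow ann(\varphi(a))=0$. This is delicate because $\varphi(a)\cdot s=0$ with $s=\varphi(c)$ only gives $a\cdot c\in\ker(\varphi)$, not $a\cdot c=0$, so in general regularity need not survive a surjection with nontrivial kernel; it is exactly here that the \emph{good epimorphism} hypothesis (and possibly a condition on $\ker\varphi$) must be invoked, as was tacitly done in the earlier preimage theorem. The remaining manipulations are the routine globality identities $\phi(\varphi^{-1}(N))=\varphi^{-1}(\phi(N))$ and $\delta(\varphi^{-1}(N))=\varphi^{-1}(\delta(N))$, which I would cite directly from the definitions of global reduction and global expansion.
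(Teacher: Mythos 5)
Your proposal mirrors the paper's proof step for step: push the membership $a\cdot b\in\varphi^{-1}(N)-\phi(\varphi^{-1}(N))$ through $\varphi$ using globality of $\phi$ to get $\varphi(a)\cdot\varphi(b)\in N-\phi(N)$, argue $ann(\varphi(a))=0$, apply the hypothesis on $N$, and pull back with globality of $\delta$. The step you single out as the main obstacle is in fact the one the paper handles least convincingly --- its argument infers $a\cdot x=0$ from $\varphi(a\cdot x)=0$, which only gives $a\cdot x\in\ker\varphi$ --- so your caution there reflects a genuine gap in the paper's own proof rather than a shortcoming of your approach relative to it.
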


\begin{proof}
Suppose that $\mathcal{N}$ is a $\phi$-$\delta$-$r$-hyperideal of $S$. Let $a\cdot b\in$
$\varphi^{-1}(\mathcal{N})-\phi(\varphi^{-1}(\mathcal{N}))$ with $ann(a)=0.$ Then $\varphi(a\cdot
b)=\varphi(a)\cdot\varphi(b)\in \mathcal{N}-\varphi(\mathcal{N}).$ If $ann(\varphi(a))\neq0,$
there is a $0\neq\varphi(x)\in S$, for $x\in\Re,$ such that $\varphi
(a)\cdot\varphi(x)=\varphi(a\cdot x)=0.$ This means there is a $0\neq
x\in\varphi^{-1}(\mathcal{N})$ such that $a\cdot x=0,$ that is a contradiction. Hence,
$ann(\varphi(a))=0.$ Since $\mathcal{N}$ is a $\phi$-$\delta$-$r$-hyperideal, then
$\varphi(b)\in\delta(\mathcal{N}).$ And so, $b\in\varphi^{-1}(\delta(\mathcal{N})).$
\end{proof}

\begin{theorem}
Let $\phi$ be a hyperideal reduction and $\delta$ be a hyperideal
expansion such that both of them are global. Let $\varphi:\Re\rightarrow S$ be
a good epimorphism. Then any $\phi$-$\delta$-$r$-hyperideal $\mathcal{M}$ of $\Re$
contains $\ker\varphi$ if and only if $\varphi(\mathcal{M})$ is a $\phi$-$\delta
$-$r$-hyperideal of $S$.
\end{theorem}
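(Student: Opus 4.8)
The plan is to prove the biconditional one direction at a time, resting everything on the single observation that, because $\ker\varphi\subseteq M$ and $\varphi$ is onto, we have $M=\varphi^{-1}(\varphi(M))$: indeed $\varphi^{-1}(\varphi(M))=M+\ker\varphi\subseteq M$ since $M$ is a hyperideal containing $\ker\varphi$, and the reverse inclusion is automatic. This identity is what will let me transport statements between $M$ and $\varphi(M)$.

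For the direction ``$\varphi(M)$ is $\phi$-$\delta$-$r$-hyperideal of $S$ $\Rightarrow$ $M$ is $\phi$-$\delta$-$r$-hyperideal of $\Re$'', I would simply invoke Theorem~\ref{the9}. Applying it with $N=\varphi(M)$ (a $\phi$-$\delta$-$r$-hyperideal of $S$, with $\phi,\delta$ global and $\varphi$ a good epimorphism) gives that $\varphi^{-1}(\varphi(M))$ is a $\phi$-$\delta$-$r$-hyperideal of $\Re$, and by the identity above this is exactly $M$. So this direction is immediate.

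For the converse, suppose $M$ is $\phi$-$\delta$-$r$-hyperideal of $\Re$, and take $a,b\in S$ with $a\cdot b\in\varphi(M)-\phi(\varphi(M))$ and $ann(a)=0$. Using surjectivity I choose $x,y\in\Re$ with $\varphi(x)=a$ and $\varphi(y)=b$; then $\varphi(x\cdot y)=a\cdot b\in\varphi(M)$ forces $x\cdot y\in\varphi^{-1}(\varphi(M))=M$. To upgrade this to $x\cdot y\in M-\phi(M)$, I use that $\phi$ is global: $\phi(M)=\phi(\varphi^{-1}(\varphi(M)))=\varphi^{-1}(\phi(\varphi(M)))$, so $\varphi(x\cdot y)\notin\phi(\varphi(M))$ reads off as $x\cdot y\notin\phi(M)$. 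Next I verify $ann(x)=0$ by contradiction, exactly as in the $\delta$-primary $r$-hyperideal theorem of Section~2: a nonzero $c$ with $x\cdot c=0$ yields $a\cdot\varphi(c)=\varphi(x\cdot c)=0$, contradicting $ann(a)=0$ once $\varphi(c)\neq 0$. With $ann(x)=0$ and $x\cdot y\in M-\phi(M)$ in hand, the $\phi$-$\delta$-$r$ property of $M$ gives $y\in\delta(M)$. Finally, globality of $\delta$ together with $M=\varphi^{-1}(\varphi(M))$ gives $\delta(M)=\varphi^{-1}(\delta(\varphi(M)))$, whence $b=\varphi(y)\in\varphi(\delta(M))=\delta(\varphi(M))$ by surjectivity, so $\varphi(M)$ is $\phi$-$\delta$-$r$-hyperideal of $S$.

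The step I expect to be the main obstacle is the annihilator pullback, i.e. deducing $ann(x)=0$ in $\Re$ from $ann(a)=0$ in $S$. The contradiction only closes when the witness $c$ can be taken with $\varphi(c)\neq 0$, that is $c\notin\ker\varphi$; this is the same delicate point already glossed over in the Section~2 theorem. To justify it I would lean on the ``good epimorphism'' hypothesis, arguing that a nonzero annihilator of $x$ survives under $\varphi$ (equivalently, that $\varphi$ carries regular elements to regular elements so that $a$ regular reflects back to $x$ regular). Everything else — moving the $\phi(\cdot)$ and $\delta(\cdot)$ conditions across $\varphi$ — is routine bookkeeping once globality and the identity $M=\varphi^{-1}(\varphi(M))$ are in place.
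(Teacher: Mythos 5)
Your proof follows essentially the same route as the paper's: the forward implication is obtained by applying Theorem~\ref{the9} together with $M=\varphi^{-1}(\varphi(M))$, and the converse by lifting $a,b$ to preimages, transporting the membership and non-membership conditions across $\varphi$ via globality of $\phi$ and $\delta$, and checking $ann(x)=0$ by contradiction. The annihilator-pullback step you flag as delicate is in fact left equally unjustified in the paper (it simply asserts $\varphi(c)\neq 0$, ignoring the possibility that the annihilating element lies in $\ker\varphi$), and your handling of the exclusion $x\cdot y\notin\phi(M)$ via $\phi(M)=\varphi^{-1}(\phi(\varphi(M)))$ is, if anything, cleaner than the paper's argument, which only establishes $a_{1}\cdot a_{2}\in M$ explicitly.
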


\begin{proof}
If $\varphi(\mathcal{M})$ is a $\phi$-$\delta$-$r$-hyperideal of $S$, then by previous
theorem, $\mathcal{M}=$ $\varphi^{-1}(\varphi(\mathcal{M}))$ is a $\phi$-$\delta$-$r$-hyperideal of
$\Re.$ Conversely, let $b_{1}\cdot$ $b_{2}\in\varphi(\mathcal{M})-\phi(\varphi(\mathcal{M}))$ and
$ann(b_{1})=0$ for $b_{1},$ $b_{2}\in S.$ Since $\varphi$ is onto, then there exist
$a_{1},$ $a_{2}\in\Re$ such that $b_{1}=\varphi(a_{1})$ and $b_{2}%
=\varphi(a_{2})$. Then $b_{1}\cdot$ $b_{2}=\varphi(a_{1})\cdot\varphi
(a_{2})=\varphi(a_{1}\cdot a_{2})=\varphi(x)\in\varphi(\mathcal{M})-\phi(\varphi(\mathcal{M}))$,
for some $x\in \mathcal{M}$. $0\in\varphi(a_{1}\cdot a_{2})-\varphi(x)=\varphi
(a_{1}\cdot a_{2}-x).$ Then, there is a $t\in a_{1}\cdot a_{2}-x$ such that
$\varphi(t)=0$: We have $a_{1}\cdot a_{2}\in t+x\subseteq\ker\varphi
+\mathcal{M}\subseteq \mathcal{M}+\mathcal{M}\subseteq \mathcal{M}$: Thus, $a_{1}\cdot a_{2}\in \mathcal{M}-\phi(\mathcal{M}).$ Let
$ann(a_{1})\neq0$: Then there exists $c (\neq 0)\in\Re$ such that $a_{1}\cdot
c=0.$ So, $\varphi(a_{1}\cdot c)=\varphi(a_{1})\cdot\varphi(c)=\varphi(0)=0.$
Since $\varphi(c)\neq0,$ then $ann(\varphi(a_{1}))=ann(b_{1})=0$. This is a
contradiction. So, this implies that $ann(a_{1})=0$: Since $\mathcal{M}$ is $\phi
$-$\delta$-$r$-hyperideal, then $a_{2}\in\delta(\mathcal{M}).$ We have $b_{2}=\varphi(a_{2}%
)\in$ $\varphi(\delta(\mathcal{M})).$ Since $\varphi$ is onto, then $\delta(\mathcal{M})=\delta
(\varphi^{-1}(\varphi(\mathcal{M})))=\varphi^{-1}(\delta(\varphi(\mathcal{M})))$, therefore
$\varphi(\delta(\mathcal{M}))=\delta(\varphi(\mathcal{M})).$ Hence, $b_{2}\in\delta(\varphi(\mathcal{M})).$
\end{proof}

\begin{remark}
From Theorem \ref{the9}, for a global hyperideal expansion $\delta$, if
$\varphi:\Re\rightarrow S$ is a good epimorphism and $\ker\varphi\subseteq \mathcal{N}$,
for some hyperideal $\mathcal{N}$ of $\Re$, then $\varphi(\delta(\mathcal{M}))=\delta(\varphi
(\mathcal{M}))$. Therefore, if $\varphi$ is an isomorphism, then $\varphi(\delta
(\mathcal{M}))=\delta(\varphi(\mathcal{M}))$ for all $\mathcal{N}\in Id(\Re).$
\end{remark}

\begin{corollary}
Let $\phi$ be a hyperideal reduction $\phi$ and $\delta$ be a hyperideal expansion, all of which are global. Let $\mathcal{N},$ $\mathcal{M}$ be any hyperideals of $\Re$
such that $\mathcal{M}$ contains $\mathcal{N}$. Then $\mathcal{M}/\mathcal{N}$ is a $\phi$-$\delta$-$r$-hyperideal of
$\Re/\mathcal{N}$ if and only if $\mathcal{M}$ is a $\phi$-$\delta$-$r$-hyperideal of $\Re$.
\end{corollary}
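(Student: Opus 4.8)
The plan is to realize the quotient $\Re/N$ as the image of a canonical good epimorphism and then reduce the statement to the immediately preceding theorem, which characterizes when the image of a hyperideal (containing the kernel) under a good epimorphism is a $\phi$-$\delta$-$r$-hyperideal. First I would introduce the natural projection $\pi:\Re\rightarrow\Re/N$ given by $\pi(a)=a+N$, and verify that $\pi$ is a good epimorphism of Krasner hyperrings with $\ker\pi=N$. This is the routine check that the quotient construction respects the canonical hyperaddition and the multiplication, that $0$ remains bilaterally absorbing, and that $\pi$ is surjective.

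Next I would record the two elementary set-theoretic identities on which the correspondence rests. Since $N\subseteq M$ by hypothesis, the hyperideal $M$ contains $\ker\pi$, and under the projection we have $\pi(M)=M/N$ together with $\pi^{-1}(M/N)=M$. Thus $M$ is exactly a hyperideal of $\Re$ containing $\ker\pi$ whose image is $M/N$, which places us squarely in the hypotheses of the preceding theorem.

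Now I would apply that theorem with $\varphi=\pi$ and $S=\Re/N$. Because $\phi$ and $\delta$ are both global by assumption, the theorem yields that $M$ is a $\phi$-$\delta$-$r$-hyperideal of $\Re$ (a hyperideal containing $\ker\pi$) if and only if $\pi(M)=M/N$ is a $\phi$-$\delta$-$r$-hyperideal of $\Re/N$. Combined with the identities $\pi(M)=M/N$ and $\pi^{-1}(M/N)=M$, this is precisely the asserted equivalence, so no further work is needed beyond the invocation.

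The main obstacle I anticipate is not the logical reduction, which is immediate once the projection is available, but confirming that $\pi$ genuinely qualifies as a good epimorphism and that the globality of $\phi$ and $\delta$ is used in the correct place, namely through the relations $\phi(\pi^{-1}(K))=\pi^{-1}(\phi(K))$ and the analogue for $\delta$; these are what guarantee that the reduction and expansion descend compatibly to the quotient. One should also keep track of the regularity condition $ann(a)=0$ and its behaviour under $\pi$, i.e.\ that regular elements of $\Re/N$ lift to suitably regular representatives in $\Re$; however, this subtlety is already absorbed into the proof of the preceding theorem, so here it suffices to check that its hypotheses are met and then quote it.
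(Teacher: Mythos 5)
Your proposal is correct and matches the paper's own argument: the paper likewise takes the canonical projection $\varphi:\Re\rightarrow\Re/N$, observes that it is a good epimorphism with $\ker\varphi=N\subseteq M$, and invokes the preceding theorem on images of $\phi$-$\delta$-$r$-hyperideals containing the kernel. The extra care you take in checking that $\pi$ is a good epimorphism and that globality enters through $\phi(\pi^{-1}(K))=\pi^{-1}(\phi(K))$ is sound but not spelled out in the paper.
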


\begin{proof}
Let $\varphi:\Re\rightarrow\Re/\mathcal{N}$ be a good epimorphism with $\ker
\varphi=\mathcal{N}\subseteq \mathcal{M}.$ By the Theorem 10, $\mathcal{M}/\mathcal{N}$ is a $\phi$-$\delta$%
-$r$-hyperideal of $\Re/\mathcal{N}$ if and only if $\mathcal{M}$ is a $\phi$-$\delta$%
-$r$-hyperideal of $\Re$.
\end{proof}

\begin{definition}
\label{def4}Let $\phi$ is a hyperideal reduction and $\delta$ be a
hyperideal expansion. Let $\mathcal{N}$ be a proper hyperideal.

(1) If for every $a\in \mathcal{N}-\phi(\mathcal{N})$, there is $b\in\delta(\mathcal{N})$ such that
$a=a\cdot b,$ then $\mathcal{N}$ is called $\phi$-$\delta$-pure hyperideal.

(2) If for every $a\in \mathcal{N}-\phi(\mathcal{N})$, there is $b\in\delta(\mathcal{N})$ such that
$a=a^{2}\cdot b,$ then $\mathcal{N}$ is called $\phi$-$\delta$-von Neumann regular hyperideal.
\end{definition}

\begin{corollary}
i) Every $\phi$-$\delta$-pure hyperideal is a $\phi$-$\delta$-$r$-hyperideal.

ii) Every $\phi$-$\delta$-von Neumann regular hyperideal is a $\phi
$-$\delta$-$r$-hyperideal.
\end{corollary}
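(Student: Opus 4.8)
The plan is to establish both parts directly from the definition of a $\phi$-$\delta$-$r$-hyperideal, exploiting the single-valuedness of the multiplication and the reversibility of the canonical hypergroup $(\Re,+)$. For part (i), I would begin with arbitrary $a,b\in\Re$ satisfying $a\cdot b\in N-\phi(N)$ and $ann(a)=0$, aiming to show $b\in\delta(N)$. Since $a\cdot b$ is a single element of $N-\phi(N)$ and $N$ is $\phi$-$\delta$-pure, Definition \ref{def4}(1) applied to the element $a\cdot b$ yields some $d\in\delta(N)$ with $a\cdot b=(a\cdot b)\cdot d=a\cdot(b\cdot d)$.

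The heart of the argument is then to promote the element-level equality $a\cdot b=a\cdot(b\cdot d)$ to the conclusion $b=b\cdot d$. First I would form the hypersum $b-b\cdot d=b+(-(b\cdot d))$ and multiply on the left by $a$; distributivity gives $a\cdot(b-b\cdot d)=a\cdot b-a\cdot(b\cdot d)$, and since $a\cdot b=a\cdot(b\cdot d)$ as elements, this hyperset contains $0$. Hence there is a $t\in b-b\cdot d$ with $a\cdot t=0$, and $ann(a)=0$ forces $t=0$, so $0\in b-b\cdot d$. Applying reversibility (axiom (v) of the canonical hypergroup) to $0\in b+(-(b\cdot d))$ gives $b\in 0-(-(b\cdot d))=0+b\cdot d=\{b\cdot d\}$, i.e.\ $b=b\cdot d$. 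Since $d\in\delta(N)$ and $\delta(N)$ is a hyperideal, it absorbs multiplication, so $b=b\cdot d\in\delta(N)$, as required.

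For part (ii), I would run the same scheme after replacing the purity relation by the von Neumann regular relation: for the element $a\cdot b\in N-\phi(N)$ there is $d\in\delta(N)$ with $a\cdot b=(a\cdot b)^{2}\cdot d=a\cdot(a\cdot b^{2}\cdot d)$. Repeating the hypersum-and-annihilator computation with $b\cdot d$ replaced by $a\cdot b^{2}\cdot d$ yields $b=a\cdot b^{2}\cdot d\in\delta(N)$, since $d\in\delta(N)$ again places the product in the hyperideal $\delta(N)$. Alternatively, one can observe that every $\phi$-$\delta$-von Neumann regular hyperideal is $\phi$-$\delta$-pure: if $a=a^{2}\cdot b$ with $b\in\delta(N)$, then $a=a\cdot(a\cdot b)$ with $a\cdot b\in\delta(N)$, so (ii) follows formally from (i).

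The step I expect to require the most care is the passage from the element equality to $b=b\cdot d$ (resp.\ $b=a\cdot b^{2}\cdot d$): one must not treat $b-b\cdot d$ as a single element but as the hyperset $b+(-(b\cdot d))$, correctly extract an annihilating representative $t$ from the set-valued product $a\cdot(b-b\cdot d)$, and invoke reversibility in place of ordinary cancellation. Everything else, namely membership in $\delta(N)$ and absorption by the hyperideal, is routine.
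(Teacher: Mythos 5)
Your proposal is correct, and it supplies substantially more than the paper does: the paper's ``proof'' of this corollary is a one-sentence assertion that the claim ``can simply be proved using Definition \ref{def4},'' with no argument given. Your argument is the right one and is the hyperstructure analogue of the classical proof that pure ideals are $r$-ideals: apply purity to the single element $a\cdot b\in N-\phi(N)$ to get $d\in\delta(N)$ with $a\cdot b=a\cdot(b\cdot d)$, then cancel $a$ using $ann(a)=0$. Your care at the cancellation step is exactly where the content lies in the Krasner setting: since distributivity is an equality for Krasner hyperrings, $a\cdot\bigl(b-(b\cdot d)\bigr)=a\cdot b-a\cdot(b\cdot d)\ni 0$, so some $t\in b-(b\cdot d)$ satisfies $a\cdot t=0$, whence $t=0$ by $ann(a)=0$, and reversibility (or uniqueness of opposites) gives $b=b\cdot d\in\delta(N)$ because $\delta(N)$ is a hyperideal. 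Part (ii) then follows either by the same computation with $a\cdot b^{2}\cdot d$ in place of $b\cdot d$, or, more cleanly, by your observation that $a=a^{2}\cdot d=a\cdot(a\cdot d)$ with $a\cdot d\in\delta(N)$ shows every $\phi$-$\delta$-von Neumann regular hyperideal is $\phi$-$\delta$-pure, reducing (ii) to (i). The only caveat worth recording is that the whole argument depends on distributivity holding as a set equality (not merely an inclusion), which is part of the Krasner hyperring axioms used in this paper; with only inclusive distributivity the step $0\in a\cdot(b-b\cdot d)$ would not follow.
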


\begin{proof}
We can simply prove that any $\phi$-$\delta$-pure hyperideal is a $\phi$%
-$\delta$-$r$-hyperideal using the Definition \ref{def4}. Moreover, each
$\phi$-$\delta$-von Neumann regular hyperideal is likewise a $\phi$-$\delta
$-$r$-hyperideal.
\end{proof}

\section{Conclusion}

In this study, we defined the expansion of $r$-hyperideals and extended this
concept to $\phi$-$\delta$-$r$-hyperideal over commutative Krasner hyperring
with nonzero identity. We investigated some of their essential characteristics
and provided some examples. We also looked into the relations between these
structures. The overall framework of these structures is then explained providing
a number of major conclusions. We obtained some specific results explaining
the structures. For instance, we indicated that when $\Re$ satisfies the
strong annihilator condition, $\mathcal{N}$ is a proper hyperideal and $\delta$ is an
expansion of hyperideals, $\mathcal{P}$ is a $\delta$-$r$-hyperideal if and only if
$\mathcal{N}\cdot \mathcal{M}\subseteq \mathcal{P}$ and $ann(\mathcal{N})=0$ implies $\mathcal{M}\subseteq\delta(\mathcal{P})$ for
finitely generatered hyperideal $\mathcal{N}$ and hyperideal $\mathcal{M}$ of $\Re$. Then we also
showed that a similar result is satisfied for $\phi$-$\delta$-$r$-hyperideals of
$\Re$.

This study offers a significant advance to the classification of hyperideals in Krasner
hyperrings. Following this paper, it is feasible to investigate additional
algebraic hyperstructures, such as Krasner $(m,n)$-hyperrings, and do research on their properties. Based on our study,
we present some open problems for future work to researchers as follows: 

1) To describe 2-absorbing $\delta$-$r$-hyperideals;

2) To consider $\delta$-$r$-subhypermodules;

3) To think $\phi$-$\delta$-$r$-subhypermodules.\\\newline

\section{ Acknowledgement}

A preliminary version of this manuscript was submitted as a pre-print in arxiv.org \cite{Xu1}.\\

{\textbf{Funding}}\\
This work was funded in part by the National Natural Science Foundation of China (grant no. 62002079). \\

{\textbf{Compliance with Ethical Standards}}\\

\textbf{Conflict of Interest} All authors declare that they
have no conflict of interest.\\

\textbf{Ethical approval} This article does not contain any
studies with human participants or animals performed
by any of the authors.\\

\textbf{Data Availability Statement}

Data sharing not applicable to this article as no data-sets were generated or analyzed during the current study.

\end{document}